\documentclass[oneside, 12pt]{amsart} \topmargin      -10mm \textwidth      160 true mm
\usepackage{amsmath, amssymb, amsfonts, amstext, amsthm, amscd}
\usepackage[mathscr]{euscript}
\usepackage{enumerate,url,hyperref}
\usepackage{tikz,color,soul,cleveref}
\usepackage[english]{babel}
\usepackage{chngcntr}
\usepackage{stmaryrd}
\usetikzlibrary{arrows.meta, positioning,arrows}
\newtheorem{theorem}{Theorem}[section]
\newtheorem{ques}[theorem]{Question}

\newtheorem{lem}[theorem]{Lemma}

\newtheorem{cor}[theorem]{Corollary}
\theoremstyle{definition}

\newtheorem{theorem'}{Theorem}[subsection]
\newtheorem{ques'}[theorem']{Question}
\newtheorem{prop'}[theorem']{Proposition}
\newtheorem{lem'}[theorem']{Lemma}
\newtheorem{CI'}[theorem']{Combinatorial Identification}
\newtheorem{cor'}[theorem']{Corollary}
\newtheorem{definition'}[theorem']{Definition}
\newtheorem{eg'}[theorem']{Example}
\newtheorem{remark'}[theorem']{Remark}

\newtheorem{theorem''}{Theorem}[subsubsection]
\newtheorem{ques''}[theorem'']{Question}
\newtheorem{prop''}[theorem'']{Proposition}
\newtheorem{lem''}[theorem'']{Lemma}
\newtheorem{CI''}[theorem'']{Combinatorial Identification}
\newtheorem{cor''}[theorem'']{Corollary}
\newtheorem{definition''}[theorem'']{Definition}
\newtheorem{eg''}[theorem'']{Example}
\newtheorem{remark''}[theorem']{Remark}

\numberwithin{equation}{section}
\title[Licci Complementary Edge Ideals]{Licci Property of Complementary Edge Ideals}
 \address{Department of Mathematics, Indian Institute of Technology Kharagpur, Kharagpur, India - 721302.}
\email{v.bhabani.lama@gmail.com, vivekbhabanilama@kgpian.iitkgp.ac.in}
\author{Vivek Bhabani Lama}

\keywords{Complementary Edge Ideals, Licci Ideals}
\subjclass[2020]{05E40, 13C05} 
\date{}

\begin{document}
	
	\begin{abstract}
		In this article, we characterize the class of complementary edge ideals which satisfy the licci property in terms of the underlying graph. Using this characterization, we associate the licci property of a complementary edge ideal to its other algebraic properties. Finally, we provide two different probability regimes for which the complementary edge ideals of random graphs are licci with high  probability and not licci with high probability respectively. 
	\end{abstract}
	
	\maketitle
  
\section{Introduction}
 Linkage theory is a classical and well studied aspect of algebraic geometry. Peskine and Szpiro in \cite{peskine1974liaison} converted the general theory of linkage into its ideal theoretic counter-part, which is an active area of research in commutative algebra for over many decades (see for instance \cite{dibaei2015linkage}, \cite{MR4658822}, \cite{huneke1987structure},   \cite{Liason} and \cite{klein2021geometric} to mention a few). One aspect of keen interest in studying linkage theory from the ideal theoretic prospective of commutative algebra has been to study the translation of the properties between two linked ideals. In this regard, researchers have extensively studied the class of \textit{licci} ideals i.e., the class of ideals that fall into the linkage class of a complete intersection (see for instance \cite{RIareLICCI} and \cite{HL}) .    

In combinatorial commutative algebra, the main aim in this regard has been to study the licci property of the ideal in a polynomial ring via the combinatorics of the associated combinatorial object (see for instnace \cite{4D} and \cite{LLSR}). For the class of edge ideals a complete characterization of licci edge ideals was given in \cite[Theorem~3.7.]{LicciEdgeIdeals}. A similar result was formulated for the class of binomial edge ideals in \cite[Theorem~3.5.]{LicciBinom}. Very recently, a new class of ideals associated to graphs was introduced in \cite{hibi2025complementary} and independently in \cite{ficarra2025complementary}, under the nomenclature of complementary edge ideals (for definitions and details see \Cref{S2.1}).  Fundamental algebraic properties such as the characterization of the class of these ideals that admit a linear resolution (see \cite[Theorem~4.5.]{ficarra2025complementary} and \cite[Theorem~2.2.]{hibi2025complementary}), primary decomposition of these ideals (see \cite[Theorem~2.1.]{ficarra2025complementary}), expressions for regularity and projective dimensions (see for instanace \cite[Theorem~4.1.]{ficarra2025complementary} and \cite[Theorem~4.2.]{hibi2025complementary}) and various other algebraic properties were studied in  \cite{ficarra2025complementary} and \cite{hibi2025complementary}.

The main goal of this paper is to characterize the complementary edge ideals that satisfy the licci property in terms of the associated graphs. Thus, we prove the following theorem: 
\vskip 2mm
\noindent
	{\bf Theorem A \bf (Theorem \ref{main theorem}).}
 \textit{  Let $G$ be a simple graph on $\{1,2,\dots,n\}$. Then, the following conditions are equivalent:
 \begin{enumerate}
\item  $(I_c(G))_\mathfrak{m} \subseteq R$ is licci,
\item $G$ is a forest or $G=K_3$, the complete graph on $3$ vertices.
\end{enumerate}
}
\vspace{2mm}
We observe that this combinatorial classification provides an association to other algebraic properties of complementary edge ideals which seem unrelated apriori to the result in \Cref{main theorem} (see \Cref{remark about other properties}). Another application of \Cref{main theorem} is the characterization of licci property of complementary edge ideals via their Castelnuovo-Mumford regularity and projective dimension (see \Cref{cor1} and \Cref{cor 2}). 

In recent years, the use of probabilistic techniques in combinatorial commutative algebra has become an area of active research (see for instance \cite{de2019random}, \cite{dochtermann2023random}, \cite{erman2018random},  \cite{Newman} and the references therein). Motivated by the use of Erd\H{o}s-Rényi random graphs in \cite{banerjee2023edge} to determine a random version of asymptotic behaviour of invariants of edge ideals, we give another application of \Cref{main theorem} in order to provide two different probability regimes for which the complementary edge ideals of random graphs are licci with high  probability and not licci with high probability respectively (see \Cref{prob corollary}).
\vspace{1mm}

The paper is organized into three sections. In \Cref{S2}, we give the basic definitions and introduce some notations that will be used throughout the article. Some properties of complementary edge ideals of graphs studied in the literature are mentioned in this section. In \Cref{S3}, we prove \Cref{main theorem}, the characterization of licci complementary edge ideals in terms of their underlying simple graph. We also give some applications of our main result in this section. 

\section{Preliminaries}\label{S2}
In this section, we introduce various notations and preliminaries used throught the article.
Throughout, let $\mathbb{K}$ denote a field and $S=\mathbb{K}[x_{1},x_{2},\dots,x_{n}]$ be the polynomial ring with indeterminates $x_{1},x_{2},\dots x_{n}$ over the field $\mathbb{K}$  where $n \in \mathbb{N}$. Let $\mathfrak{m}$ denote the homogeneous maximal ideal $(x_1,x_2,\dots,x_n)$ and let $R=S_\mathfrak{m}$, i.e. the localization of $S$ at the homogeneous maximal ideal $\mathfrak{m}$.  Let $ M $ be a finitely generated graded $ S $-module. Consider the graded minimal free resolution of $ M $: 
$$ 0 \to S_p=\bigoplus_{j\in\mathbb{Z}}S(-j)^{\beta_{p, j}(M)}\xrightarrow{\phi_p}\dots \xrightarrow{\phi_1}S_0=\bigoplus_{j\in\mathbb{Z}}S(-j)^{\beta_{0, j}(M)} \xrightarrow{\phi_0} M \to {0}, $$
where $ S(-j) $ is the twist of $ S $ by degree $ j $, i.e., $ S(-j)_n:=S_{n-j} $ for all $ n\in\mathbb{Z} $. Using the above construction of minimal free resolution of $M$, we have the following invariants:
$$ \mathrm{reg}(M):=\max\{j-i:\beta_{i,j}(M) \neq 0\},$$
$$ \mathrm{pd}(M):=\max\{i:S_i\neq 0\}. $$

\noindent Finally, for a graph $G=(V(G),E(G))$, let $V(G)$ denote the vertex set of the graph $G$ and $E(G)$ denote the edge set of $G$. 
\subsection{Complementary Edge Ideals}\label{S2.1} In this subsection, we recall the definitions and results about complementary edge ideals in the literature. The class of \textit{Complementary Edge Ideals} was first introduced in \cite{hibi2025complementary} and independently in \cite{ficarra2025complementary}. 
\begin{definition'}
    Let $G$ be a simple graph with $V(G)=\{1,2,\dots,n\}$ and edge set $E(G)$. The \textit{complementary edge ideal} associated to $G$ is the ideal in the polynomial ring $S=\mathbb{K}[x_{1},x_{2},\dots,x_{n}]$ given by:
    $$I_c(G)= (x_1x_2...x_n\setminus x_ix_j\ |\  \{i,j\} \in E(G) ).$$
\end{definition'}
\noindent Fundamental properties of these ideals are well-studied in \cite{ficarra2025complementary} and \cite{hibi2025complementary}. We state a few of these results for completeness. The following result characterizes the Cohen-Macaulayness of complementary edge ideals in terms of its underlying simple graph. 
\begin{theorem'}\cite[Theorem~2.8.]{ficarra2025complementary}, \cite[Theorem~1.4.]{hibi2025complementary} \label{CM} Let $G$ be a simple graph then the following are equivalent:
\begin{enumerate}
    \item $S/I_c(G)$ is Cohen-Macaulay.
    \item $G$ is either a complete graph or a forest.
\end{enumerate}
\end{theorem'}

\noindent The following result provides bounds on the projective dimension and Castelnuovo-Mumford regularity of complementary edge ideals. 
\begin{theorem'}\cite[Proposition~4.1.]{hibi2025complementary}\label{bound on reg and pd}
Let $G$ be a simple graph on the vertex set $\{1,2,\dots,n\}$ and $I_c(G)$ be the complementary edge ideal of $G$, which is an ideal in $S=\mathbb{K}[x_1,x_2,\dots,x_n]$. Then, the following conditions hold:
\begin{enumerate}
    \item $1 \leq \mathrm{pd}(I) \leq 2$ , where $\mathrm{pd}(I)$ represents the projective dimension of $I$. 
    \item $n-2\leq \mathrm{reg}(I) \leq n-1$
\end{enumerate}
\end{theorem'}
The following theorem provides a characterization of Cohen-Macaulay height $2$ complementary edge ideals in terms of their projective dimension. 
\begin{theorem'}\cite[Theorem~1.1.]{hibi2025complementary} \label{pd lem}
Let $G$ be a simple graph on the vertex set $\{1,2,\dots,n\}$, which is not the complete graph $K_n$ and $I_c(G)$ be the complementary edge ideal of $G$, which is an ideal in $S=\mathbb{K}[x_1,x_2,\dots,x_n]$. Then $\mathrm{ht}(I_c(G))=2$ and $S/I_c(G)$ is Cohen-Macaulay if and only if $\mathrm{pd}(I_c(G))=1$.
\end{theorem'}
\noindent In particular, we mention the explicit regularity of the complementary edge ideal of a forest. 
\begin{theorem'}\cite[Theorem~4.2.]{hibi2025complementary}\label{char using reg and pd}
Let $G$ be a simple graph on the vertex set $\{1,2,\dots,n\}$, and $I_c(G)$ be the complementary edge ideal of $G$, which is an ideal in $S=\mathbb{K}[x_1,x_2,\dots,x_n]$. Then, the following hold:
\begin{enumerate}
    \item $\mathrm{pd}(I)=1$ and $\mathrm{reg}(I)=n-2$ if and only if $G$ is a tree or a complete graph.
    \item  $\mathrm{pd}(I)=1$ and $\mathrm{reg}(I)=n-1$ if and only if $G$ is a disconnected forest.
\end{enumerate}
\end{theorem'}
\subsection{Erd\H{o}s-Rényi Random Graphs}
Let $ G $ be a graph and $ G(n, p) $ denote the Erd\H{o}s-Rényi random graph where $ n\geq 1 $ and $ p\in [0, 1] $. The vertex set of $ G(n, p) $ is $ \{1,2,\dots , n\} $ and each set $ \{i, j\}\subseteq \{1,2,\dots,n\} $ with $ i\neq j $ is an edge in $ G(n, p) $ with probability $ p $ and is independent of other edges. More precisely, given a graph $ G $ with $V(G)= \{1,2,\dots,n\}$ and $|E(G)|= m $,
$$ \mathbb{P}\{G(n, p)=G\}=p^m(1-p)^{\begin{pmatrix}n\cr 2\end{pmatrix}-m}. $$
The above expression shows that every graph with $ m $ edges has an equal chance of being chosen. When $ p>1 $, $ G(n, p) $ is defined as $ G(n, \min(p, 1)) $. Typically, one examines the properties of $ G(n, p) $ as $ n \to \infty $ and $ p $ also changes with $ n $. It is standard to write $ p $ even though it is understood that $ p:=p(n) $. By the properties of $ G(n, p) $, we refer to combinatorial properties, which are the characteristics of graphs that remain unchanged under graph isomorphisms.  We will denote the complementary edge ideal  of $ G(n, p)$ as the random ideal instead of $I_c(G(n,p))$.
The following theorem provides probability regimes for which a  certain graph appears as a subgraph of an Erd\H{o}s-Rényi random graph:
\begin{theorem'}\cite[Theorem~5.3.]{frieze2015introduction}\label{prob}
  Let $H$ be a fixed graph with $|E(H)|>0$. Then, we have: $$\displaystyle\lim_{{n \rightarrow \infty}}\mathbb{P}[H \subseteq G(n,p)]= \begin{cases} 
      1 & n^{(\frac{1}{m(H)})}p \rightarrow 0 \\
      0 & n^{(\frac{1}{m(H)})}p  \rightarrow \infty 
   \end{cases}
$$  where $m(H)=\max\left\{d(K)=\frac{|E(K)|}{|V(K)|} \ | \ K \text{ is a subgraph of $H$} \right\}$. 
\end{theorem'}

\subsection{Licci Ideals} In this subsection, we recall the basic definitions and a few results about licci ideals. As the nomenclature suggests, licci ideals are ideals are the class of ideals that are in the linkage class of a complete intersection ideal. 
\begin{definition'}
   Let  $R$ be a regular local ring and let $I$ and $J$ be two proper ideals of $R$. $I$ and $J$ are called \textit{directly linked} if there exist a regular sequence $\boldsymbol{\alpha}=\alpha_1,\dots,\alpha_n $ in $I \cap J$ such that $I=(\boldsymbol{\alpha}):J$ and $J=(\boldsymbol{\alpha}):I$.  
\end{definition'}
\begin{remark'}
Two directly linked ideals $I$ and $J$ will be denoted as $I \sim J$.
\end{remark'}   
\noindent Next we recall the notion of a linkage class of an ideal in a regular local ring.

\begin{definition'}\label{Linkage class def}
Two ideals $I$ and $J$ are said to be in the same \textit{linkage class} if there exists a sequence of direct links of the follwoing form: 
\begin{center}
    $I=I_0\sim I_1 \sim \dots \sim I_m = J $
\end{center}
\end{definition'}
\begin{remark'}
 In \Cref{Linkage class def}, if $J$ is a complete intersection ideal, then $I$ is said to be in the linkage class of a complete intersection (often reffered to as $I$ is a licci ideal)  
\end{remark'}
 
  A necessary condition for a homogeneous ideal in a polynomial ring to be licci is given in \cite{huneke1987structure}. This is an essential tool in order to rule out various classes of non-licci ideals. Therefore, we recall the following theorem from the literature: 

\begin{theorem'}\cite[Corollary~5.12.]{huneke1987structure}\label{reg condition}
     Let $I$ be a Cohen-Macaulay homogeneous ideal in a standard graded polynomial ring $S = \mathbb{K}[x_1,x_2,\dots,x_n]$ with the graded maximal
 ideal $\mathfrak{m}=(x_1,x_2,\dots,x_n)$. If $I_{\mathfrak{m}} \subset R = S_{\mathfrak{m}}$ is licci, then
 \begin{center}
     $\mathrm{reg} (S/I) \geq (\mathrm{ht}(I)-1)(\mathrm{indeg}(I)-1)$
 \end{center}
where $\mathrm{indeg}(I)=\mathrm{min} \{i : I_i \neq 0\}$ represents the initial degree of the ideal $I$.

\end{theorem'}
\section{Complementary Edge Ideals and the Licci Property}\label{S3}
    In this section, we prove our main result which is the characterization of licci complementary edge ideals. We prove that the class of complementary edge ideals that satisfy this property are the ones whose underlying graphs are forests. To prove our main theorem, we first prove a result on the height of the complementary edge ideal of a complete graph. While this result is previously established in (\cite[Corollary~2.3.]{ficarra2025complementary}), our proof employs a different approach via its connections to edge ideals of complete hypergraphs and we include it for the sake of completeness.  

\begin{lem}\label{lemma on height}
Let $n\geq 3$ be a positive integer and $G=K_n$ be a complete graph on the vertex set $\{1,2,\dots,n\}$. Then, $\mathrm{ht}(I_c(G))=3$.   
\end{lem}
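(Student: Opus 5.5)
For $G=K_n$ every pair $\{i,j\}$ is an edge, so $I_c(K_n)$ is generated by all squarefree monomials of degree $n-2$ in $x_1,\dots,x_n$. This is exactly the edge ideal of the complete $(n-2)$-uniform hypergraph on $n$ vertices, equivalently the squarefree Veronese ideal $I_{n,n-2}$. The plan is to compute its height from the Alexander dual description, i.e. via minimal vertex covers of this hypergraph.

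\textbf{Step 1 (minimal primes).} For a squarefree monomial ideal, the minimal primes are the ideals $P_C=(x_i : i\in C)$ where $C$ is a minimal vertex cover of the hypergraph whose edges are the supports of the generators. Hence $\mathrm{ht}(I_c(K_n))$ equals the minimum size of a subset $C\subseteq\{1,\dots,n\}$ meeting every $(n-2)$-subset.

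\textbf{Step 2 (lower bound).} Suppose $|C|\le 2$. Then the complement of $C$ has at least $n-2$ elements, so it contains some $(n-2)$-subset $F$. Since $F$ is disjoint from $C$, the set $C$ is not a cover. Equivalently, $x_F\in I_c(K_n)$ but $x_F\notin P_C$. So every cover has size at least $3$.

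\textbf{Step 3 (upper bound).} Take any $3$-subset $C$, which exists since $n\ge 3$. Its complement has $n-3<n-2$ elements, so every $(n-2)$-subset meets $C$. Thus $C$ is a cover and $I_c(K_n)\subseteq P_C$. Minimality is automatic from Step 2, so all $3$-subsets give minimal primes.

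\textbf{Conclusion.} Combining the two steps, $\mathrm{ht}(I_c(K_n))=3$.

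The only point needing care is Step 1, the translation between minimal primes of a squarefree monomial ideal and minimal vertex covers (transversals) of the associated hypergraph. This is standard: a monomial prime $P_C$ contains a squarefree monomial $x_F$ if and only if $C\cap F\neq\emptyset$. The edge case $n=3$ also needs a check. There the generators are $x_1,x_2,x_3$, the ideal is $\mathfrak{m}$, and its height is $3$, consistent with the argument.
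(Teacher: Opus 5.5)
Your proof is correct and follows the same route as the paper: you identify $I_c(K_n)$ with the ideal generated by all squarefree monomials of degree $n-2$ and then compute its height. The only difference is that the paper cites the well-known formula $\mathrm{ht}=n-d+1$ for the squarefree Veronese ideal of degree $d$, whereas your minimal-vertex-cover argument in Steps 1--3 proves that formula directly for $d=n-2$.
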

\begin{proof}
 We observe that for a complete graph $G=K_n$, the ideal $I_c(G)$ is an ideal generated by all possible squarefree monomials of degree $n-2$. Also, for a monomial ideal $J$ generated by all possible squarefree monomials of a fixed degree $d$, it is well-known that the height of $J$ is given by $n-d+1$. Therefore, it easily follows that $\mathrm{ht}(I_c(G))=n-(n-2)+1=3$.     
\end{proof}

Now, we prove our main result which characterizes the licci complementary edge ideals.
\begin{theorem}\label{main theorem}
 Let $G$ be a simple graph on $\{1,2,\dots,n\}$. Then, the following conditions are equivalent:
 \begin{enumerate}
\item  $(I_c(G))_\mathfrak{m} \subseteq R$ is licci,
\item $G$ is a forest or $G=K_3$, the complete graph on $3$ vertices.
\end{enumerate}
\end{theorem}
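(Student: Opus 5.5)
Licci ideals are Cohen--Macaulay, since linkage preserves the Cohen--Macaulay property and complete intersections are Cohen--Macaulay. So the plan is to first reduce to the graphs allowed by \Cref{CM}, then separate them using the height.

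For (2)$\Rightarrow$(1), I treat forests and $K_3$ separately. If $G$ is a forest, then by \Cref{pd lem} and \Cref{char using reg and pd} we have $\mathrm{pd}(I_c(G))=1$. (If $G$ is complete and a forest, then $G=K_2$ or $K_1$, which is trivial.) Moreover $\mathrm{ht}(I_c(G))=2$ and $S/I_c(G)$ is Cohen--Macaulay, so $(I_c(G))_\mathfrak{m}$ is a perfect ideal of height $2$ in the regular local ring $R$. By the classical theorem of Apéry--Gaeta (Peskine--Szpiro), every such ideal is licci.

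If $G$ has no edges, then $I_c(G)=0$ or the statement degenerates; I will handle this as a convention. If $G$ has a single edge, $I_c(G)$ is principal, hence a complete intersection.

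If $G=K_3$, then $I_c(G)=(x_1,x_2,x_3)$, which is a complete intersection and therefore licci.

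For (1)$\Rightarrow$(2), suppose $(I_c(G))_\mathfrak{m}$ is licci. Then $S/I_c(G)$ is Cohen--Macaulay; localization at $\mathfrak{m}$ suffices for graded modules. By \Cref{CM}, $G$ is a forest or $G=K_n$. It remains to rule out $K_n$ for $n\ge 4$; the cases $n\le 2$ are forests and $n=3$ is allowed. By \Cref{lemma on height}, $\mathrm{ht}(I_c(K_n))=3$, and $\mathrm{indeg}(I_c(K_n))=n-2$. Also $I_c(K_n)$ is the squarefree Veronese ideal, which has a linear resolution, so $\mathrm{reg}(I)=n-2$ and $\mathrm{reg}(S/I)=n-3$; this value is also given by \Cref{char using reg and pd}(1). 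The criterion \Cref{reg condition} then requires
\[
n-3 \;\geq\; 2(n-3),
\]
that is, $n\le 3$. This contradicts $n\geq 4$, so $G$ is a forest or $K_3$.

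I expect the main obstacle to be justifying the licci property for forests. This relies on the external Gaeta theorem rather than on a result stated in the paper, so I will cite it explicitly and check that its hypotheses (perfect of height $2$ in a regular local ring) are met after localization at $\mathfrak{m}$. The height-$3$ regularity computation is routine.
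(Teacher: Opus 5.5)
Your proof is correct relative to the results the paper quotes, and it follows the paper's skeleton. Licci implies Cohen--Macaulay, so $G$ is a forest or some $K_n$ by \Cref{CM}. Forests give perfect ideals of height $2$, hence licci by Ap\'ery--Gaeta and Peskine--Szpiro, and $I_c(K_3)=(x_1,x_2,x_3)$ is a complete intersection.

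The one genuine difference is how you exclude $K_n$ for $n\ge 4$. The paper feeds the Huneke--Ulrich inequality only the general bound $\mathrm{reg}(I_c(G))\le n-1$ from \Cref{bound on reg and pd}. Since $2n-5\le n-1$ exactly when $n\le 4$, this kills $K_n$ only for $n\ge 5$. The paper then has to treat $K_4$ separately, noting $I_c(K_4)=I(K_4)$ and citing the classification of licci edge ideals. You instead use the exact value $\mathrm{reg}(S/I_c(K_n))=n-3$ from the linear resolution of the squarefree Veronese ideal, so $n-3\ge 2(n-3)$ fails for every $n\ge 4$ at once.

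Your route is uniform and needs no external edge-ideal classification. Its only cost is the standard fact that squarefree Veronese ideals have linear resolutions: they are polymatroidal, hence have linear quotients. The paper's route needs only the crude bound, but pays with an extra case and an extra citation. For that fact, rely on the Veronese argument, not on \Cref{char using reg and pd}(1). As quoted, that result attaches $\mathrm{pd}(I)=1$ to complete graphs, whereas $I_c(K_n)$ is Cohen--Macaulay of height $3$, so $\mathrm{pd}(I_c(K_n))=2$ for $n\ge 3$.

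There is one caveat, which your care with degenerate cases brushes against and which the paper shares. Forests with isolated vertices and at least two edges break the (2)$\Rightarrow$(1) argument. Take the edges $\{1,2\},\{2,3\}$ on $\{1,2,3,4\}$. Then $I_c(G)=x_4(x_1,x_3)=(x_4)\cap(x_1,x_3)$, which is not unmixed and hence not licci. So \Cref{CM} and \Cref{pd lem} must be read with the tacit hypothesis that $G$ has no isolated vertices, and the statement you are proving should carry that hypothesis too. Your single-edge case is the only place where isolated vertices do no harm, since there $I_c(G)$ is principal.
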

\begin{proof}
For (1) implies (2), let $(I_c(G))_\mathfrak{m}$ be licci then, $S/I_c(G)$ is Cohen-Macaulay. Therefore, by \Cref{CM}, $G$ is either a complete graph or $G$ is a forest. Suppose that $G$ is a complete graph, hence $G=K_n$ then, by \Cref{lemma on height}, we have $\mathrm{ht}(I_c(K_n))=3$. Therefore, by \Cref{reg condition}, we have the following condition on the regularity of $I_c(G)$:
$$\mathrm{reg}(I_c(G))\geq (\mathrm{ht}(I_c(G))-1)(\mathrm{indeg}(I_c(G)-1)+1.$$
However, the $\mathrm{indeg}(I_c(G))=n-2$ and hence we arrive at the following inequality for regularity:
$$\mathrm{reg}(I_c(G)) \geq(\mathrm{ht}(I_c(G)-1)((n-2)-1)+1=(3-1)(n-3)+1.$$
Therefore, we get that the regularity is given by $\mathrm{reg}(I_c(G))\geq 2n-6+1=2n-5$. However, from \Cref{bound on reg and pd}, we have that $\mathrm{reg}(I_c(G)) \leq n-1$. Clearly for $n \geq 5$, we have  $2n-5 > n-1$ which gives a contradiction and hence $G$ is not $K_n$ for $n \geq 5$. Therefore, $G$ is a either a forest or $G=K_n$ with $n=3,4$. For $n=4$ the complementary edge ideal of $K_4$ coincides with the edge ideal of $K_4$, that is $I_c(K_4)=I(K_4)$. However, $(I(K_4))_{\mathfrak{m}}$ is  not  licci in $R_\mathfrak{m}$ (see \cite[Theorem~3.7.]{LicciEdgeIdeals})  For (2) implies (1): Suppose $G$ is a forest, then by \Cref{char using reg and pd}, the projective dimension $\mathrm{pd}(I_c(G
))=1$ and since $G$ is not complete, therefore by \Cref{pd lem}, $S/I_c(G)$ is Cohen-Macaulay and $\mathrm{ht}(I_c(G))=2$. Since $I_c(G)$ is a height 2 ideal such that  $S/I_c(G)$ is Cohen-Macaulay. Therefore,  $(I_c(G))_\mathfrak{m} \subseteq R$ is a licci ideal (see \cite{peskine1974liaison} for details). Next if $G=K_3 \text{ or } K_4$, for $n=3$ we have $G=K_3$ and hence $I_c(G)=I_c(K_3)=(x_1,x_2,x_3)$ which is a complete intersection ideal and hence licci. This completes the proof.
\end{proof}
A consequence of \Cref{main theorem}, is the following characterization of licci complementary edge ideals whose graphs have more than one connected components in terms of their regularity and projective dimension.
\begin{cor}\label{cor1}
Let $G$ be a simple graph on the vertex set $\{1,2,\dots,n\}$ with more than one connected component. Then, the following are equivalent:
\begin{enumerate}
    \item  $(I_c(G))_\mathfrak{m} \subseteq R$ is licci,
    \item $S/I_c(G)$ is Cohen-Macaulay,
\item $G$ is a forest,
\item $\mathrm{reg}(I_c(G))=n-1$ and $\mathrm{pd}(I)=1$.
\end{enumerate}
\end{cor}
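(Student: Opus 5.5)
The plan is to run everything through \Cref{main theorem}, \Cref{CM} and \Cref{char using reg and pd}. The hypothesis that $G$ has more than one connected component will be used in exactly one way: it excludes every complete graph, since $K_n$ is connected. In particular it excludes $K_3$.

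\emph{(1) $\Leftrightarrow$ (3).} By \Cref{main theorem}, $(I_c(G))_\mathfrak{m}$ is licci if and only if $G$ is a forest or $G=K_3$. Because $G$ is disconnected, the alternative $G=K_3$ cannot occur, so (1) holds exactly when $G$ is a forest.

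\emph{(2) $\Leftrightarrow$ (3).} By \Cref{CM}, $S/I_c(G)$ is Cohen--Macaulay if and only if $G$ is a complete graph or a forest. The complete case is again excluded by disconnectedness, which gives the equivalence.

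\emph{(3) $\Leftrightarrow$ (4).} Suppose $G$ is a forest. Since $G$ is disconnected, it is a disconnected forest, and part (2) of \Cref{char using reg and pd} gives $\mathrm{pd}(I_c(G))=1$ and $\mathrm{reg}(I_c(G))=n-1$. Conversely, suppose $\mathrm{pd}(I_c(G))=1$ and $\mathrm{reg}(I_c(G))=n-1$. The same part (2) of \Cref{char using reg and pd} says that $G$ is a disconnected forest, and in particular a forest.

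The proof is essentially bookkeeping, so I do not expect a real obstacle. The only point needing care is reading the symbol $I$ in (4) as $I_c(G)$. I would also check that the boundary case $n=2$ (two isolated vertices) is handled consistently by the cited results.
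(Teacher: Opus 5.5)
Your proposal is correct and is exactly the paper's argument, which just combines \Cref{main theorem}, \Cref{CM} and \Cref{char using reg and pd}, using disconnectedness to rule out complete graphs (in particular $K_3$). The boundary case you flag points to a tacit hypothesis that you and the paper share: the cited characterizations only hold when $G$ has no isolated vertices (for edges $\{1,2\},\{2,3\}$ on $\{1,2,3,4\}$ one gets $I_c(G)=x_4(x_1,x_3)$, which is not Cohen--Macaulay even though $G$ is a disconnected forest), and under that assumption a disconnected $G$ has $n\ge 4$, so $n=2$ never arises.
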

\begin{proof}
The proof is clear due to \Cref{main theorem}, \Cref{CM} and \Cref{char using reg and pd}.     
\end{proof}

Using this characterization, we get the following corollary regarding the regularity and projective dimension of complementary edge ideals of connected graphs that have the licci property. 
\begin{cor}\label{cor 2}
Let $G$ be a simple connected graph on the vertex set $\{1,2,\dots,n\}$ which is not $K_n$, the complete graph on $n$ vertices. Then, the following are equivalent:
\begin{enumerate}
    \item  $(I_c(G))_\mathfrak{m} \subseteq R$ is licci,
\item $G$ is a tree,
\item $S/I_c(G)$ is Cohen-Macaulay,
\item $\mathrm{reg}(I_c(G))=n-2$ and $\mathrm{pd}(I)=1$.
\end{enumerate}
\end{cor}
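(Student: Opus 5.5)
The plan is to prove Corollary \ref{cor 2} by establishing the chain of equivalences (1)$\Leftrightarrow$(2)$\Leftrightarrow$(3)$\Leftrightarrow$(4), using that $G$ is connected and $G\neq K_n$.

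For (1)$\Leftrightarrow$(2), I will apply \Cref{main theorem}. It says $(I_c(G))_\mathfrak{m}$ is licci if and only if $G$ is a forest or $G=K_3$. The case $G=K_3$ is excluded: $G$ is connected and not complete, so $G \neq K_3$ regardless of $n$. A connected forest is exactly a tree, so (1) holds if and only if $G$ is a tree.

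For (2)$\Leftrightarrow$(3), I will use \Cref{CM}. The quotient $S/I_c(G)$ is Cohen-Macaulay exactly when $G$ is a complete graph or a forest. Since $G\neq K_n$, and a connected forest is a tree, this is equivalent to $G$ being a tree.

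For (2)$\Leftrightarrow$(4), I will use part (1) of \Cref{char using reg and pd}. It gives $\mathrm{pd}(I_c(G))=1$ and $\mathrm{reg}(I_c(G))=n-2$ if and only if $G$ is a tree or a complete graph. Excluding $K_n$ leaves exactly the trees.

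There is no real obstacle, since each step is a direct specialization of an earlier result. The only point needing care is the degenerate small-$n$ situation. For $n\le 2$ every connected graph is complete, so the hypothesis is vacuous there. I will also note that for $n\geq 3$ no connected non-complete graph equals $K_3$, so the exceptional case of \Cref{main theorem} never enters.
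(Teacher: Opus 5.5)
Your proposal is correct and follows the paper's proof. The paper likewise obtains each equivalence by specializing \Cref{main theorem}, \Cref{CM} and part (1) of \Cref{char using reg and pd} to connected non-complete graphs, where forests are exactly trees and $K_3$ cannot occur; your remark that the hypothesis is vacuous for $n\le 2$ is a small detail the paper leaves implicit.
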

\begin{proof}
The proof is clear due to \Cref{main theorem}, \Cref{CM} and \Cref{char using reg and pd}. 
\end{proof}
\noindent As a consequence of \Cref{main theorem}, we have the following result which gives the explicit probability that an Erd\H{o}s-Rényi random graph will correspond to a licci complementary edge ideal.

\begin{cor}\label{prob corollary}
 Let $G(n,p)$ be an Erd\H{o}s-Rényi random graph. Then, the following hold:
   $$\displaystyle\lim_{{n \rightarrow \infty}}\mathbb{P}[I_c(n,p) \text{ is not licci}]= \begin{cases} 
      1 & np \rightarrow 0 \\
      0 & np \rightarrow \infty 
   \end{cases}
$$ 
\end{cor}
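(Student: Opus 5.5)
The plan is to reduce the statement to a subgraph-containment question and then apply \Cref{prob} as it is stated in the paper. By \Cref{main theorem}, for $n\geq 4$ the ideal $(I_c(G(n,p)))_\mathfrak{m}$ fails to be licci exactly when $G(n,p)$ is not a forest. The exceptional graph $K_3$ only occurs for $n=3$, so it is irrelevant as $n\to\infty$. Moreover, $G(n,p)$ is not a forest if and only if it contains a cycle $C_k$ as a subgraph for some $3\leq k\leq n$. The event $\{I_c(n,p)\text{ is not licci}\}$ therefore coincides, for $n\geq 4$, with the event $\{C_k\subseteq G(n,p) \text{ for some } k\geq 3\}$.

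Next I compute the density parameter for cycles. For $H=C_k$, every subgraph $K$ of $H$ is either a forest, with $|E(K)|<|V(K)|$, or $C_k$ itself, with $|E(K)|=|V(K)|$. Hence $m(C_k)=1$, and the threshold quantity $n^{1/m(H)}p$ appearing in \Cref{prob} is exactly $np$ for every cycle.

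For the regime $np\to 0$, I apply \Cref{prob} as stated with $H=K_3=C_3$. This gives $\mathbb{P}[K_3\subseteq G(n,p)]\to 1$. Since containing a triangle forces $G(n,p)$ not to be a forest, monotonicity of probability gives $\mathbb{P}[I_c(n,p)\text{ is not licci}]\geq \mathbb{P}[K_3\subseteq G(n,p)]\to 1$.

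For the regime $np\to\infty$, \Cref{prob} as stated gives $\mathbb{P}[C_k\subseteq G(n,p)]\to 0$ for each fixed $k$. I then need to upgrade this to the statement that no cycle of any length appears. The plan is to write the not-licci probability as the probability of a union over $3\leq k\leq n$ and bound it by $\sum_{k}\mathbb{P}[C_k\subseteq G(n,p)]$.

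\textbf{Main obstacle.} \Cref{prob} is only a statement about a fixed $H$, so it gives no uniformity in $k$. The finitely many short cycles $k\leq k_0$ are handled directly by \Cref{prob}. For the long cycles, I would first try to control the tail of the sum by an estimate on the number of copies of $C_k$ in $G(n,p)$, combined with the same regime in which the short cycles vanish. The aim is to show that this tail can be made arbitrarily small uniformly in $n$ by choosing $k_0$ large, and then to conclude that $\mathbb{P}[I_c(n,p)\text{ is not licci}]\to 0$. This uniformity step is the one I expect to be the real difficulty.
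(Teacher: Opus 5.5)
There is a genuine gap, and it cannot be closed, because the statement as written is false. The quickest check is $p=1$. It satisfies $np\to\infty$, yet $G(n,1)=K_n$, and \Cref{main theorem} says $I_c(K_n)$ is not licci for $n\geq 4$. So the probability in question is $1$, not $0$.

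The error comes from \Cref{prob}, which you (like the paper) use ``as stated''. It is misquoted: the actual result (Frieze--Karo\'nski, Theorem~5.3) says $\mathbb{P}[H\subseteq G(n,p)]\to 0$ when $n^{1/m(H)}p\to 0$ and $\to 1$ when $n^{1/m(H)}p\to\infty$. It must go this way, since containing $H$ is a monotone increasing property. This breaks both halves of your plan:
\begin{enumerate}
\item Your $np\to 0$ step derives a false claim. A first-moment bound gives $\mathbb{P}[K_3\subseteq G(n,p)]\le\binom{n}{3}p^3\le (np)^3/6\to 0$, not $1$.
\item The obstacle you flag for $np\to\infty$ is not a technicality. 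The expected number of $k$-cycles is $\frac{(n)_k}{2k}p^k\sim\frac{(np)^k}{2k}\to\infty$ for each fixed $k$, so no tail estimate can succeed. The correct threshold theorem with $m(K_3)=1$ shows $G(n,p)$ contains a triangle with probability tending to $1$. By (1)$\Rightarrow$(2) of \Cref{main theorem}, the not-licci probability therefore tends to $1$ throughout this regime.
\end{enumerate}
The paper's own proof rests on the same misquotation, so it does not supply the missing step. It also applies a fixed-$H$ result to the union over all cycle lengths, and it leaves out triangles by requiring $m\geq 4$.

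Your architecture is the right one for the statement with the two regimes interchanged. For $np\to\infty$, the correct threshold theorem for the single graph $H=C_3$, together with the event inclusion you wrote down, does the job. For $np\to 0$, the uniformity in $k$ that worried you is automatic, since $\mathbb{P}[G(n,p)\text{ contains a cycle}]\le\sum_{k=3}^{n}\frac{(n)_k}{2k}p^k\le\sum_{k\geq 3}(np)^k=\frac{(np)^3}{1-np}\to 0$.

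Even this corrected version needs a caveat, which also affects the reduction in your first paragraph. The implication ``forest $\Rightarrow$ licci'' fails for graphs with isolated vertices, and these are typical when $np\to 0$. Suppose $i$ is an isolated vertex and $G$ has at least two edges. Then $x_i$ divides every generator of $I_c(G)$, so $(x_i)$ is a minimal prime of height one. Being a non-principal squarefree monomial ideal, $I_c(G)$ also has a minimal prime of height at least $2$. Hence $I_c(G)$ is not unmixed, not Cohen--Macaulay, and not licci, even when $G$ is a forest. For instance, $n=5$ and $E(G)=\{\{1,2\},\{2,3\}\}$ give $I_c(G)=x_4x_5(x_1,x_3)$. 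So the $np\to 0$ half of the corrected statement holds only if isolated vertices are discarded before forming $I_c$, which is the setting where the forest criterion is valid.
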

\begin{proof}
From \Cref{main theorem}, it is enough to show that the expression for the limit of the probability of $G(n,p)$ containing some cycle as a subgraph $H=C_m$ for some $m \geq 4$, is given by:
$$\displaystyle \lim_{{n \rightarrow \infty}}\mathbb{P}[H \text{ is a subgraph of }G(n,p) ]= \begin{cases} 
      1 & np \rightarrow 0 \\
      0 & np \rightarrow \infty 
   \end{cases}$$
Using \Cref{prob}, it is enough to show the following equality:
$$m(H)=\max\left\{d(K)=\frac{|E(K)|}{|V(K)|} \ | \ K \text{ is a subgraph of $H$} \right\}=1.$$
However since $H=C_m$ is a cycle for some $m > 3$, the only possibilities for $K$ are either $K=H=C_m$ or $K$ is a path with $t_1$ many vertices where $t_1 \leq m$ or $K$ is a disjoint union    of paths with $t_2$ many vertices where $t_2 < m$. In the first case, if $K=C_m$ it is clear that $d(K)=1$ as the number of vertices of a cycle is equal to the number of edges. In the case that $K$ is a path with $t_1$ many edges, $d(K)=\frac{t_1-1}{t_1}=1-\frac{1}{t_1}<1$. Finally, in the case when $K$ is a disjoint union of paths with $t_2$ many edges, $d(K)<1$ as the number of vertices is more than the number of edges in this case. Therefore, $m(H)=1$ and hence the proof.    
\end{proof}

Since \Cref{main theorem} gives a combinatorial characterization of complementary edge ideals that are licci, we may now associate the licci property to various seemingly unrelated algebraic properties of complementary edge ideals (unrelated in the sense that apriori to the combinatorial characterization in \Cref{main theorem}, there is no reason to believe that one of these algebraic properties would imply the other) . Given that the properties of complementary edge ideals have been extensively studied in \cite{ficarra2025complementary} and \cite{hibi2025complementary}, we have the following non-reversible implications from \Cref{main theorem}:
\begin{cor}\label{remark about other properties}

\noindent If $(I_c(G))_{\mathfrak{m}}$ is licci then the following hold:
\begin{enumerate}
    \item $I_c(G)$ is sequentially Cohen-Macaulay. 
    \item  $I_c(G)^{\vee}$ is component wise linear.
    \item  $I_c(G)^{\vee}$ has linear quotients.
    \item $I_c(G)^{\vee}$ has linear resolution.
    \item  $I_c(G)$ has linear resolution.
\end{enumerate}
Furthermore, none of these implications are reversible. 
\end{cor}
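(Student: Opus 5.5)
By \Cref{main theorem}, if $(I_c(G))_{\mathfrak{m}}$ is licci then $G$ is a forest or $G=K_3$. So the plan is to verify each of the five properties for these graphs, and then give a counterexample for each converse. For $G=K_3$ the ideal is $(x_1,x_2,x_3)$. This is a complete intersection generated by variables, so it is Cohen--Macaulay and has a linear resolution (the Koszul complex). Its Alexander dual is the principal ideal $(x_1x_2x_3)$, and a principal ideal trivially has linear quotients, linear resolution and componentwise linear resolution. So all of (1)--(5) hold for $K_3$.

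For a forest $G$, \Cref{CM} gives that $S/I_c(G)$ is Cohen--Macaulay, and hence sequentially Cohen--Macaulay; this is (1). For (5) I would use the linear-resolution characterization in \cite[Theorem~4.5.]{ficarra2025complementary} and \cite[Theorem~2.2.]{hibi2025complementary}. If that is not directly available, I would argue as follows. By \Cref{char using reg and pd}, $\mathrm{pd}(I_c(G))=1$ and $\mathrm{reg}(I_c(G))\in\{n-2,n-1\}$. Since $I_c(G)$ is generated in degree $n-2$ and is CM of height $2$, Hilbert--Burch gives $m-1$ syzygies. For a tree, the syzygies between the monomials for adjacent edges $\{i,j\},\{j,k\}$ are linear ($x_k u - x_i u'$), and these give $n-2$ linear syzygies, so the resolution is linear.

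For a disconnected forest, however, $\mathrm{reg}=n-1$, so (5) fails there. This suggests the intended statement should be read against the published characterizations, and I would re-check the statement before relying on it. This is the step I expect to be the main obstacle: a disconnected forest such as two disjoint edges on $4$ vertices gives $I_c=(x_3x_4,x_1x_2)$, which has a quadratic syzygy and hence no linear resolution.

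For (2)--(4) I would use Eagon--Reiner and Herzog--Hibi duality. By Eagon--Reiner, $S/I_c(G)$ Cohen--Macaulay is equivalent to $I_c(G)^{\vee}$ having a linear resolution; this is (4). By Herzog--Hibi, sequentially Cohen--Macaulay is equivalent to the dual being componentwise linear; this is (2). For linear quotients (3), I would use the explicit primary decomposition from \cite[Theorem~2.1.]{ficarra2025complementary} to describe the generators of $I_c(G)^{\vee}$, and then order them along a leaf-removal ordering of the forest to exhibit the linear quotients.

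For non-reversibility I would use complete graphs $K_n$ with $n\ge 4$. They are Cohen--Macaulay by \Cref{CM}, so (1), (2) and (4) hold. Their complementary edge ideals are squarefree Veronese ideals, so they have linear quotients, the dual is again squarefree Veronese, and (3) and (5) hold as well. Yet $K_n$ is neither a forest nor $K_3$, so by \Cref{main theorem} the ideal is not licci. This single example shows that none of the implications reverses.
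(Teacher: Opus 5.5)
Your reduction to forests and $K_3$ via Theorem~\ref{main theorem}, your treatment of (1)--(4), and your use of $K_n$ ($n\ge 4$) for non-reversibility follow the paper's route. The paper just cites the known characterizations; you make the Eagon--Reiner and Herzog--Hibi dualities explicit, which is fine.

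More importantly, your objection to (5) is correct, and you should state it as a counterexample rather than something to ``re-check''. Take $G$ to be the disjoint union of two edges. Then $I_c(G)=(x_1x_2,x_3x_4)$ is a complete intersection, hence licci. But its minimal resolution $0\to S(-4)\to S(-2)^2\to I_c(G)\to 0$ is not linear.

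More generally, every disconnected forest without isolated vertices is licci by Theorem~\ref{main theorem}. Yet Theorem~\ref{char using reg and pd}(2) gives $\mathrm{reg}(I_c(G))=n-1$, while $I_c(G)$ is generated in degree $n-2$, so (5) fails for all of them. This also contradicts the paper's own Corollary~\ref{cor1}(4).

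The paper's one-line proof overlooks this. Item (5) holds only when $G$ is additionally connected, i.e.\ $G$ is a tree or $K_3$. That corrected statement is what your argument actually proves.

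Some smaller points:
\begin{enumerate}
\item For adjacent edges $\{i,j\},\{j,k\}$ the linear syzygy is $x_i\,u_{\{i,j\}}-x_k\,u_{\{j,k\}}$ (both terms equal $x_1\cdots x_n/x_j$), not $x_ku-x_iu'$. Also, exhibiting $n-2$ linear syzygies does not by itself show that they generate the syzygy module. You do not need this detour: for a tree, Theorem~\ref{char using reg and pd}(1) gives $\mathrm{pd}=1$ and $\mathrm{reg}=n-2=\mathrm{indeg}$, so the resolution is linear.
\item For (3), the ``leaf-removal ordering'' is left vague. A cleaner route: for a forest without isolated vertices, the minimal primes of $I_c(G)$ are exactly the $(x_i,x_j)$ with $\{i,j\}\notin E(G)$. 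Hence $I_c(G)^{\vee}=I(\overline{G})$ is generated in degree $2$. For quadratic monomial ideals, linear resolution is equivalent to linear quotients (Herzog--Hibi--Zheng), so (3) follows from (4).
\item For (1), (2) and (4), it is more robust to use directly that licci ideals are Cohen--Macaulay, rather than ``forest $\Rightarrow$ Cohen--Macaulay'' via Theorem~\ref{CM}. The latter tacitly requires no isolated vertices: for example, a path on three vertices plus an isolated vertex gives $I_c=x_4(x_1,x_3)$, which is not unmixed.
\end{enumerate}
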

\begin{proof}
    The corollary is a straightforward application of the combinatorial characterization in \Cref{main theorem} and various properties proved in \cite[Theorem~2.5.]{ficarra2025complementary} and \cite[Theorem~2.2.]{hibi2025complementary}.
\end{proof}
\noindent Another very similar direction in which research in commutative algebra is actively conducted is the classification and study of glicci ideals (see for instance \cite{migliore2002monomial}, \cite{migliore2013glicci} and \cite{nagel2008glicci}). In view of this, we conclude this section by posing the following question:
\begin{ques}
For what class of simple graphs $G$ does the complementary edge ideal $I_c(G)$ fall into the gorenstein linkage class of a complete intersection ideal? 
\end{ques}
\section*{Acknowledgments}
The author would like to thank his supervisor Dr. Arindam Banerjee for his valuable suggestions and guidance throught the project. 
The author would also like to sincerely thank Dr. Kanoy Kumar Das for his valuable discussions and pointing out various subtle ideas and technicalities, Ms. Shruti Priya for reading the manuscript and pointing out grammatical errors. The author is thankful to the Government of India for supporting him in this work through the Prime Minister Research Fellowship. The author acknowledges the use of the computer algebra system Macaulay2 \cite{M2} for testing their computations.
\section*{Funding}
\noindent The author is supported by PMRF fellowship, India.
\section*{Conflict of interest}
\noindent The author declares that there are no conflicts of interest regarding the publication of this article.
%\end{acknowledgement}
%\end{acknowledgement}
\bibliographystyle{amsplain}	
\bibliography{refs}

\end{document}